\newcommand{\R}{\mathbb{R}}
\newcommand{\N}{\mathbb{N}}
\newtheorem{theorem}{Theorem}[section]
\newtheorem{corollary}[theorem]{Corollary}
\newtheorem{lemma}[theorem]{Lemma}
\renewcommand{\le}{\leqslant}
\renewcommand{\geq}{\geqslant}
\renewcommand{\ge}{\geqslant}
\begin{document}

\title[Gradient estimates]{Gradient estimates
\\ for a class of anisotropic nonlocal operators}

\thanks{This work has been supported by the Australian Research Council Discovery Project
N.E.W. ``Nonlocal Equations at Work''.}

\author{Alberto Farina}

\address{Alberto Farina:
LAMFA -- CNRS UMR 6140 --
Universit\'e de Picardie Jules Verne --
Facult\'e des Sciences --
33, rue Saint-Leu --
80039 Amiens CEDEX 1, France --
Email:
{\tt alberto.farina@u-picardie.fr}  
}

\author{Enrico Valdinoci}

\address{Enrico Valdinoci:
University of Western Australia --
Department of Mathematics and Statistics --
35, Stirling Highway -- WA 6009 Crawley,
Australia -- and --
Universit\`a degli Studi di Milano -- Dipartimento di Matematica --
Via Cesare Saldini, 50 -- 20133 Milano, Italy
-- and -- Istituto di Matematica Applicata e Tecnologie Informatiche --
Via Ferrata, 1 -- 27100 Pavia, Italy
-- Email:
{\tt enrico@mat.uniroma3.it} 
}

\begin{abstract}
Using a classical technique introduced by Achi E. Brandt for elliptic equations,
we study a general class of nonlocal equations obtained as a superposition
of classical and
fractional operators in different variables. We obtain
that the increments of the derivative of the solution in the direction
of a variable experiencing classical diffusion are controlled linearly, with
a logarithmic correction. {F}rom this, we obtain H\"older estimates
for the solution.
\end{abstract}

\subjclass[2010]{35R11, 35B53, 35R09.}
\keywords{Nonlocal anisotropic integro-differential equations, 
regularity results.}

\maketitle

\section{Introduction}

In this paper, we will consider a general family of
nonlocal operators
built from classical and fractional Laplacians in different directions.
Namely, the whole of the space~$\R^n$ is divided into orthogonal subspaces
along which a possibly different order operator acts. These sectional operators can be either classical
or fractional, but at least
one of them (say, one involving the last coordinate) is assumed
to be of classical type. Our aim is to obtain regularity estimates
for the solution in this last variable and then to deduce global regularity results.
\medskip

The mathematical framework in which we work is the following.
We denote by~$\{e_1,\dots,e_n\}$ the Euclidean base of~$\R^n$.
Given a point~$x\in\R^n$, we use the notation
$$ x=(x_1,\dots,x_n)=x_1 e_1+\dots + x_n e_n,$$
with~$x_i\in\R$. 

We divide the variables of~$\R^n$ into $m$ subgroups of variables,
that is we consider~$m\in\N$ and~$N_1,\dots,N_m\in\N$, with
$N_1+\dots+N_{m-1}=n-1$ and~$N_m=1$.
For~$i\in \{1,\dots,m\}$, we use the notation~$N'_i:= N_1+\dots+N_i$,
and we take into account
the set of coordinates
\begin{equation}\label{JK905L:gh}
\begin{split}
& X_1 := (x_1,\dots, x_{N_1}) \in \R^{N_1}\\
& X_2 := (x_{N_1+1},\dots, x_{N_2'}) \in \R^{N_2}\\
& \vdots\\
& X_i := (x_{N_{i-1}'+1},\dots, x_{N_i'}) 
\in \R^{N_i}\\
& \vdots\\
& X_{m-1} := (x_{N_{m-2}'+1},\dots, x_{N_{m-1}'}) 
\in \R^{N_{m-1}}\\
{\mbox{and }}\qquad& X_m:=x_n.
\end{split}\end{equation}
Given~$i\in\{1,\dots,m-1\}$ and~$s_i\in (0,1]$, in this paper we study
the (possibly fractional) $s_i$-Laplacian 
in the $i$th set of coordinates~$X_i$
(the fractional case corresponds to the choice $s_i\in(0,1)$,
while the classical case reduces to~$s_i=1$).

To denote these operators acting on subsets of variables,
given~$y=(y_1,\dots,y_{N_i})\in \R^{N_i}$
it is useful to introduce the notion of
increment induced by~$y$
with respect to the $i$th set of coordinates in~$\R^n$,
that is one defines
\begin{equation}\label{jU6yhwedcG90:lk}
y^{(i)} := y_1
e_{N_{i-1}'+1}+\dots+
y_{N_i}
e_{N_i'} \in \R^n.\end{equation}
With this notation, one can define
the $N_i$-dimensional 
(possibly fractional) $s_i$-Laplacian
in the $i$th set of coordinates~$X_i$ as
\begin{equation}\label{FL} (-\Delta_{X_i})^{s_i} u(x) :=
\left\{
\begin{matrix}
-\partial^2_{x_{N_{i-1}'+1}} u(x)-\,\dots\,-
\partial^2_{x_{N_i'}} u(x)& {\mbox{ if }}s_i=1, \\
&\\ c_{N_i, s_i}
\displaystyle\int_{\R^{N_i}} \frac{2u(x)-u(x+y^{(i)})-u(x-y^{(i)})
}{|y^{(i)}|^{N_i+2s_i}}\,dy^{(i)}
& {\mbox{ if }}s_i\in(0,1),
\end{matrix}
\right. \end{equation}
The quantity~$c_{N_i,s_i}$ in~\eqref{FL}
is just a positive normalization constant, that is
\begin{equation}\label{cns}
c_{N,s}:=
\frac{2^{2s-1}\,\Gamma(s+\frac{N}{2})}{\pi^{\frac{N}{2}}\,|\Gamma(-s)|},\end{equation}
where~$\Gamma$ is the Euler's Gamma Function.
See for instance~\cite{land, silv, guida, bucur} and references
therein for further motivations and an introduction to fractional operators.
\medskip

In this paper we consider a
pseudo-differential operator, which is the sum of
(possibly) fractional Laplacians in the different coordinate directions~$X_i$,
with~$i\in\{1,\dots,m-1\}$, plus
a local second derivative in the direction~$x_n$.
The operators involved may have
different orders and they may be multiplied
by possibly different coefficients.
Without loss of generality, we will assume that the last coefficient
(that is the one related to the local variable)
is normalized to be~$1$.
That is, given~$a_1,\dots,a_{m-1}\ge0$,
we define
\begin{equation}\label{GAOP} L\,:=\,\sum_{i=1}^{m-1} a_i (-\Delta_{X_i})^{s_i} -\partial^2_{x_n}.\end{equation}
Of course, the operator $L$ comprises as particular cases
the classical Laplacian, the fractional Laplacian, and the sum of fractional Laplacians
or fractional derivatives in different directions. Since some of the~$a_j$'s may
vanish, the case of degenerate operators is also taken into account.\medskip

We observe that in many concrete applications,
different types of classical/anomalous diffusions
may take place in different reference variables:
a natural example occurs for instance when classical diffusion
involving space variables is considered together with the
anomalous diffusion arising from the transmission of genetic information,
see e.g.~\cite{GEN1, GEN2}, and these kinds
of phenomena can be fruitfully discussed with the aid of operators
such as the one in~\eqref{GAOP}.\medskip

To state our main result, it is convenient to
introduce the following domain notation.
Given~$r>0$, we denote by~$B^N_r$ the open ball
of~$\R^N$ centered at the origin and with radius~$r$.
Also,
given~$d_1,\dots,d_m>0$, we set~$d:=(d_1,\dots,d_m)$
and
$$ Q_d := B^{N_1}_{d_1} \times\dots\times
B_{d_{m-1}}^{N_{m-1}}
\times (-d_m,d_m) =\prod_{i=1}^m B_{d_i}^{N_i},$$
where in the latter identity
we used the convention that~$N_m:=1$.

Then,
given~$\kappa>0$,
we denote by~$Q_{d,\kappa}$ the dilation of~$Q_d$ of factor~$\kappa$
in the last coordinate (leaving the others put),
that is
$$ Q_{d,\kappa}:=
B^{N_1}_{d_1} \times\dots\times
B_{d_{m-1}}^{N_{m-1}}
\times (-\kappa d_m,\kappa d_m).$$
The main result of this article is a quantitative bound
on the continuity of the derivative of the solution with respect to the last coordinate.
This quantitative estimate is ``almost'' of Lipschitz type, in the sense
that the increment of the last derivative of the solution
is bounded linearly, up to a logarithmic correction. This estimate will
also be the cornerstone to prove additional regularity results such as H\"older estimates
for the last derivatives and for the solutions in any direction.
Thus, the core of the matter is the following result:

\begin{theorem}\label{MAIN}
Let $f : Q_d \to \R$ and $u : \R^n \to \R$ be a solution of $Lu = f$ in $Q_d$.
Then, for any $y\in \left( -\frac{d_m}{4},\, \frac{d_m}{4}\right)$,
$$ \left| \partial_{x_n} u(0,y)
-\partial_{x_n} u(0,-y)
\right| \le
\frac{8\,\|\partial_{x_n} u\|_{L^\infty(\R^n)}}{d_m}\,|y|+
2\kappa \,|y|\,\log\frac{2d_m}{|y|},$$
where
\begin{equation}\label{CH}\begin{split}& 
\kappa:=\frac43 \,\left( \|Lu\|_{L^\infty(Q_d)}+
\sum_{i=1}^m
\frac{\|u\|_{L^\infty(\R^n)}}{1-\left( \frac34\right)^s}\,
\frac{\tilde c_{N_i,s_i} }{d_i^{2s_i}}\right)\\
{\mbox{and }}\;& 
\tilde{c}_{N,s}:=
\frac{2^{s}\,\Gamma\left(s+1\right)\,\Gamma\left(\frac{N}2+s
\right)}{\Gamma\left(\frac{N}2\right)}.\end{split}
\end{equation}
\end{theorem}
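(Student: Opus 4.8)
The plan is to run Achi Brandt's argument: symmetrize $u$ in the last variable so that the quantity to be estimated becomes the pointwise value of a derivative that vanishes on $\{x_n=0\}$, and then control that derivative by comparison with explicit barriers on cubes shrinking in the $x_n$--direction, the logarithmic correction being exactly the cost of the infinitely many scales involved. \emph{Step 1 (even reflection).} Set $w(x):=u(x)+u(x-2x_n e_n)$, the doubled even part of $u$ with respect to $\{x_n=0\}$. Each $(-\Delta_{X_i})^{s_i}$ acts only on the block $X_i$ and $-\partial^2_{x_n}$ is invariant under $x_n\mapsto-x_n$, so $L$ commutes with this reflection and $Lw=\tilde f$ in $Q_d$ with $\tilde f(x):=f(x)+f(x-2x_ne_n)$; hence $\|\tilde f\|_{L^\infty(Q_d)}\le2\|Lu\|_{L^\infty(Q_d)}$, $\|w\|_{L^\infty(\R^n)}\le2\|u\|_{L^\infty(\R^n)}$ and $\|\partial_{x_n}w\|_{L^\infty(\R^n)}\le2\|\partial_{x_n}u\|_{L^\infty(\R^n)}$. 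Being even in $x_n$, $w$ satisfies $\partial_{x_n}w(X',0)=0$ for every $X'$, and the chain rule gives
\[ \partial_{x_n}w(0,y)=\partial_{x_n}u(0,y)-\partial_{x_n}u(0,-y). \]
Thus it suffices to bound $|\partial_{x_n}w(0,y)-\partial_{x_n}w(0,0)|$ by $\frac{4\|\partial_{x_n}w\|_{L^\infty(\R^n)}}{d_m}|y|+\kappa_w|y|\log\frac{2d_m}{|y|}$, where $\kappa_w$ is $\kappa$ with $\|Lu\|$, $\|u\|$ replaced by $\|\tilde f\|$, $\|w\|$; inserting the doubling bounds then reproduces exactly the constants $8$ and $2\kappa$ of the statement.

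\emph{Step 2 (one scale, via a barrier and the maximum principle).} For $\rho\in(0,d_m]$ work on $Q^{(\rho)}:=B^{N_1}_{d_1}\times\dots\times B^{N_{m-1}}_{d_{m-1}}\times(-\rho,\rho)$ and seek a barrier $\beta_\rho$, even in $x_n$, of size $\lesssim\|w\|_{L^\infty}$, with $\partial_{x_n}\beta_\rho(\cdot,0)\equiv0$, such that $L\beta_\rho\ge\|\tilde f\|_{L^\infty(Q^{(\rho)})}$ in the interior and $\beta_\rho\pm\big(w-w(X',0)\big)\ge0$ outside $Q^{(\rho)}$ (on the lateral sides one uses $\|w\|_{L^\infty(\R^n)}$, on the caps $\{|x_n|=\rho\}$ one uses $\|\partial_{x_n}w\|_{L^\infty}$ — the source of the $\|\partial_{x_n}w\|_{L^\infty}/\rho$ term). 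A natural Ansatz is a superposition of explicit model functions in the $X_i$ variables (suitable powers of $(d_i^2-|X_i|^2)_+$, whose $s_i$--Laplacian on $B^{N_i}_{d_i}$ is known in closed form — here is where $\tilde c_{N_i,s_i}/d_i^{2s_i}$ originates), a quadratic-type profile in $x_n$ (legitimate because, after symmetrization, $w-w(X',0)$ vanishes to first order at the hyperplane), and a term absorbing the nonlocal tails $\int_{\{|z^{(i)}|>d_i\}}\frac{4\|w\|_{L^\infty}}{|z^{(i)}|^{N_i+2s_i}}\,dz^{(i)}\lesssim\|w\|_{L^\infty}d_i^{-2s_i}$. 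At an interior maximum of $\beta_\rho\pm(w-w(X',0))$ the $x_n$--second difference and every $X_i$--second difference have the favourable sign, so the nonlocal maximum principle for $L$ yields $|w(X',x_n)-w(X',0)|\le\beta_\rho(X',x_n)$ on $Q^{(\rho)}$; differentiating the barrier at $x_n=0$ and using $\partial^2_{x_n}w=\sum_{i<m}a_i(-\Delta_{X_i})^{s_i}w-\tilde f$ produces a one--scale estimate of the schematic form
\[ \sup_{|x_n|\le\tfrac34\rho}\frac{|\partial_{x_n}w(0,x_n)|}{|x_n|}\le(1-\theta)\sup_{|x_n|\le\rho}\frac{|\partial_{x_n}w(0,x_n)|}{|x_n|}+\frac{C}{\rho}\Big(\|\tilde f\|_{L^\infty(Q_d)}+\sum_{i=1}^m\tfrac{\tilde c_{N_i,s_i}}{d_i^{2s_i}}\|w\|_{L^\infty(\R^n)}\Big) \]
for some $\theta\in(0,1)$ (equivalently, one may encode the whole thing in a single barrier of logarithmic type).

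\emph{Step 3 (iteration, and the provenance of the constants).} Applying Step 2 along $\rho_k=(3/4)^k d_m$, $k=0,\dots,K$, with $(3/4)^Kd_m\simeq|y|$, and summing the recursion: the contraction makes the top scale collapse into $\frac{4\|\partial_{x_n}w\|_{L^\infty(\R^n)}}{d_m}|y|$, the $K\simeq\log\frac{2d_m}{|y|}$ intermediate scales each add $\simeq\kappa_w|y|$, and the geometric summation of the nonlocal tails over the $\rho_k$ is what assembles the factors $\frac1{1-(3/4)^{s_i}}$ inside $\kappa$ (with the convention $s_m:=1$ for the classical block, for which $\tilde c_{1,1}=1$ and $\tfrac1{1-3/4}=4$). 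Finally $|\partial_{x_n}w(0,y)|\le|y|\,\sup_{|x_n|\le|y|}\frac{|\partial_{x_n}w(0,x_n)|}{|x_n|}$ closes the argument. The technical heart is Step 2: exhibiting, block by block, explicit functions of $X_i$ whose $s_i$--Laplacian is bounded below on $B^{N_i}_{d_i}$ by a constant $\asymp d_i^{-2s_i}\tilde c_{N_i,s_i}$ (for $s_i<1$ a genuine computation with Euler's $\Gamma$, matching the definition of $\tilde c_{N,s}$), gluing them with the classical $x_n$--profile into a supersolution of the full anisotropic $L$, and fixing boundary values, all with constants sharp enough to recover exactly $2\kappa$; a subordinate issue is to secure enough a priori regularity of $u$ — or to argue by approximation — so that $Lu=f$ and all the pointwise/maximum-principle manipulations above are licit.
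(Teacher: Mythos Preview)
Your sketch diverges substantially from the paper's argument, and Step~2 contains a genuine gap that I do not see how to close with the tools you allow yourself.

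\textbf{The gap.} You want to apply the maximum principle to $\beta_\rho\pm\big(w(X',x_n)-w(X',0)\big)$. But $w(\cdot,0)$ is a nontrivial function of the $X'$ variables, so subtracting it alters the equation in those variables:
\[
L\big[w(\cdot,\cdot)-w(\cdot,0)\big](X',x_n)=\tilde f(X',x_n)-\sum_{i=1}^{m-1}a_i\,(-\Delta_{X_i})^{s_i}\big[w(\cdot,0)\big](X').
\]
The last term is not controlled by $\|\tilde f\|_{L^\infty}$ or $\|w\|_{L^\infty}$ (it would require, roughly, $C^{2s_i}$ control of $w$ in the $X_i$ variables, which is exactly what one is trying to prove). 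Hence the hypothesis ``$L\beta_\rho\ge\|\tilde f\|_{L^\infty}$'' is not enough to force $L\big(\beta_\rho\pm(w-w(\cdot,0))\big)$ to have a sign, and the comparison fails. The subsequent iteration also has an internal inconsistency: a genuine contraction factor $(1-\theta)$ would yield a H\"older bound rather than log--Lipschitz, and your explanation that the factors $\tfrac{1}{1-(3/4)^{s_i}}$ arise from ``geometric summation of the nonlocal tails over the $\rho_k$'' cannot be right, since you only iterate in the $x_n$ direction (the $d_i$ never change), so no $s_i$-dependent ratio appears in that sum.

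\textbf{What the paper does instead.} Brandt's device---and the paper follows it literally---is not an iteration over scales but a single comparison in an \emph{extended} space. One replaces the variable $x_n$ by a pair $(y,z)\in\R^2$, introduces the operator $\mathcal L:=\sum_i a_i(-\Delta_{X_i})^{s_i}-\tfrac12\partial_y^2-\tfrac12\partial_z^2$, and sets
\[
\phi(X',y,z):=\tfrac14\big[u(X',y_++z_+)-u(X',y_+-z_+)-u(X',-y_++z_+)+u(X',-y_+-z_+)\big].
\]
This $\phi$ satisfies $|\mathcal L\phi|\le\|Lu\|_{L^\infty(Q_d)}$ \emph{without any uncontrolled correction}, vanishes identically where $y\le0$ or $z\le0$, and is bounded by $\|\partial_{x_n}u\|_{L^\infty}\min\{y_+,z_+\}$. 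One then builds a single explicit barrier
\[
\psi(X',y,z)=\sum_i\frac{\|u\|_{L^\infty}}{1-(3/4)^s}\Big(1-\big(1-|X_i'|^2/d_i^2\big)_+^{s_i}\Big)+\frac{4\|\partial_{x_n}u\|_{L^\infty}}{d_m}\,y_+z_+ +\kappa\,y_+z_+\log\frac{2d_m}{y_++z_+},
\]
so that the logarithm is built into the barrier rather than emerging from iteration. The factor $\tfrac{1}{1-(3/4)^s}$ comes from calibrating the $X_i$-profile to dominate $\|u\|_{L^\infty}$ on the lateral complement, the constant $\tilde c_{N_i,s_i}$ from the exact identity $(-\Delta_{X_i})^{s_i}(1-|X_i'|^2/d_i^2)_+^{s_i}=\tilde c_{N_i,s_i}d_i^{-2s_i}$, and $\kappa=\tfrac43(\cdots)$ from the computation $(\partial_y^2+\partial_z^2)\big(yz\log\tfrac{2d_m}{y+z}\big)\le-\tfrac32$. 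One application of the maximum principle gives $|\phi|\le\psi$; dividing by $z$ and sending $z\searrow0$ at $X'=0$ yields the theorem with the stated constants.
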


Higher regularity results for different types of nonlocal
anisotropic operators
have been obtained in~\cite{SRO, ROV}. In particular,
in these articles
only operators with the same fractional
homogeneity were taken into account. Very recently,
in~\cite{FALL},
a regularity approach
for anisotropic operators  and for sums of anisotropic
fractional Laplacians with different homogeneities
has been taken into account, with methods different from the ones exploited
in this paper.
\medskip

Operators as the one studied here have been considered in~\cite{FV},
where a Lipschitz regularity result and a Liouville type theorem
have been established (in this sense, Theorem~\ref{MAIN}
can be seen as a higher regularity theorem with respect to
formula (7) in~\cite{FV}).\medskip

The method of proof of Theorem~\ref{MAIN} relies on an elementary,
but very deep, technique introduced in~\cite{brandt69} for the classical case
of the Laplacian. Roughly speaking, this method
is based on dealing with a family of additional variables
and an operator in this extended space. These additional variables
are chosen to take into account the increments of the solution
and the extended operator to preserve the right notion of solutions.
Then, one constructs barriers for this new operator, which in turn
provide the desired estimate on the original solution.

Of course, in the nonlocal case one has to construct new barriers
for the extended operator, since this operator also possesses nonlocal
features, and the new barriers,
differently from the classical case, must control the original solution
on the whole of the complement of the domain, and not only along the boundary.
\medskip

{F}rom Theorem~\ref{MAIN}, and the fact that
$$ \lim_{y\to0} |y|^\alpha \log|y| =0\qquad{\mbox{ for any }}\;\alpha\in(0,1),$$
we deduce that, for any $\alpha\in(0,1)$, any Lipschitz solution $u$
is $C^{1,\alpha}$ in the interior with respect to the variable $x_n$,
as stated explicitly in the next result:

\begin{corollary}\label{here}
Let $u:\R^n\to\R$ be a solution of~$ L_\star u=f$ in~$ B_1$. Then,
for any $\alpha\in(0,1)$,
$$ \| \partial_{x_n} u\|_{C^\alpha(B_{1/2})} \le C\,\Big(
\|f\|_{L^\infty(B_1)}+\|u\|_{L^\infty(\R^n)}\Big),$$
for some~$ C > 0$, that depends on~$\alpha$,
$ a_1,\dots,a_{m-1}$
$ s$ and~$N_1,\dots,N_{m-1}$.
\end{corollary}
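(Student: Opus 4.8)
The plan is to derive the $C^\alpha$ bound on $\partial_{x_n} u$ directly from the almost-Lipschitz increment estimate of Theorem \ref{MAIN}, combined with a standard covering/translation argument that turns the pointwise symmetric bound at the origin into a genuine Hölder seminorm on $B_{1/2}$. The operator $L_\star$ appearing in the statement should be the operator $L$ of \eqref{GAOP} (possibly after the normalization making the $x_n$-coefficient equal to $1$), and $B_1$ is $B^n_1$, which for a suitable choice of $d=(d_1,\dots,d_m)$ with all $d_i$ comparable to a fixed constant contains a cube $Q_d$ and is contained in a dilate of it; the first step is therefore to fix such a $d$ so that $Q_d \subset B_1$ and $\|Lu\|_{L^\infty(Q_d)} \le \|f\|_{L^\infty(B_1)}$.

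Next I would invoke the Lipschitz regularity already available for this class of operators (formula (7) in \cite{FV}, quoted in the excerpt) to bound $\|\partial_{x_n} u\|_{L^\infty(\text{neighborhood})}$, and hence the first term $\frac{8\|\partial_{x_n}u\|_{L^\infty}}{d_m}|y|$, by $C(\|f\|_{L^\infty(B_1)} + \|u\|_{L^\infty(\R^n)})$. Plugging this into Theorem \ref{MAIN} gives, for $z = (0,y)$ and $z' = (0,-y)$ with $|y|$ small,
$$ |\partial_{x_n} u(z) - \partial_{x_n} u(z')| \le C\big(\|f\|_{L^\infty(B_1)}+\|u\|_{L^\infty(\R^n)}\big)\,\Big(|y| + |y|\log\tfrac{2d_m}{|y|}\Big).$$
Then, using the elementary inequality $|y|\log\frac{2d_m}{|y|} \le C_\alpha |y|^\alpha$ valid for $|y|\le$ some fixed radius (which follows from $\lim_{t\to 0^+} t^{1-\alpha}\log\frac1t = 0$, a rephrasing of the limit recorded in the excerpt), we obtain the $C^\alpha$ bound but only for the special pair of points symmetric about the origin and only in the $x_n$-direction.

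The remaining, and slightly delicate, step is the \emph{reduction to this special configuration}. For an arbitrary pair of points $p, q \in B_{1/2}$, one first disposes of the case where $p$ and $q$ differ in some coordinate other than $x_n$ by using the already-known Lipschitz (hence $C^\alpha$) regularity of $\partial_{x_n} u$ in those directions — this is where one must check that differentiating $Lu=f$ in $x_n$ keeps us inside the scope of \cite{FV}, i.e. that $\partial_{x_n} u$ itself solves an equation of the same type with right-hand side $\partial_{x_n} f$ controlled by $\|f\|_{L^\infty}$ after a slight shrinking of the domain; if one wants to avoid assuming $f$ differentiable, one instead runs Theorem \ref{MAIN} on a translated cube centered at the midpoint of $p$ and $q$. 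Concretely: given $p,q$ with the same non-$x_n$ coordinates, let $x_0$ be their midpoint and $y := \frac{p_n - q_n}{2}$; apply Theorem \ref{MAIN} to the translated solution $u(\cdot + x_0)$, which solves $L(u(\cdot+x_0)) = f(\cdot+x_0)$ on the translated cube $x_0 + Q_d$, provided $x_0 + Q_d \subset B_1$ — this holds because $p,q \in B_{1/2}$ and $d$ was chosen small. This yields the estimate at the pair $(p,q)$ with the constant uniform in $x_0$. Covering $B_{1/2}$ and taking the supremum over all such pairs produces the claimed bound $\|\partial_{x_n} u\|_{C^\alpha(B_{1/2})} \le C(\|f\|_{L^\infty(B_1)} + \|u\|_{L^\infty(\R^n)})$, with $C$ depending on $\alpha$, the $a_i$, the $s_i$ and the $N_i$ through $\kappa$, $\tilde c_{N_i,s_i}$, the fixed choice of $d$, and the constant $C_\alpha$ from the logarithmic estimate.

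I expect the main obstacle to be precisely the \emph{translation-and-covering bookkeeping}: one must verify that the translated cubes stay inside $B_1$ uniformly, that the constant $\kappa$ in \eqref{CH} stays bounded as the base point moves (it does, since it depends only on $d$, the coefficients, the homogeneities, and $\|u\|_{L^\infty(\R^n)}$, all translation-invariant once the cube fits inside $B_1$), and — if one insists on general $f$ rather than assuming it differentiable — that handling displacements transverse to $x_n$ does not secretly require regularity of $\partial_{x_n}u$ in those variables that is not yet established. The cleanest route is to observe that \emph{any} displacement $p - q$ can be written so that, after a translation, the two points lie symmetric about a common center along the $x_n$-axis \emph{is not possible in general}; hence one genuinely needs a two-part argument, and the transverse part is best handled by applying Theorem \ref{MAIN} once in $x_n$ and then controlling the transverse increment of $\partial_{x_n} u$ via the same Theorem \ref{MAIN} applied after swapping which variable plays the role of the ``last'' one — but this is only legitimate if that other variable is also of classical type. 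If it is fractional, one must instead fall back on interpolation between the $L^\infty$ bound on $u$ and the Lipschitz bound in that direction from \cite{FV}. Sorting out this case distinction cleanly is the real content of the proof; everything else is the elementary manipulation of the logarithm already flagged in the excerpt.
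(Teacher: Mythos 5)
Your overall strategy is aligned with the paper's: apply Theorem~\ref{MAIN} at translated base points, convert the factor $|y|\log\frac{2d_m}{|y|}$ into $C_\alpha|y|^\alpha$, and absorb the term involving $\|\partial_{x_n}u\|_{L^\infty}$ via the $L^\infty$ bound on $\partial_{x_n}u$ from~\cite{FV} (formula~(8) there, not formula~(7)). However, there is a concrete gap in how you handle $\|\partial_{x_n}u\|_{L^\infty(\R^n)}$. Theorem~\ref{MAIN} requires this quantity over the \emph{whole} of~$\R^n$, whereas the bound from~\cite{FV} is an interior estimate on a ball where the equation holds --- $u$ is not assumed to solve the equation globally. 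You write ``$\|\partial_{x_n}u\|_{L^\infty(\text{neighborhood})}$'' as if a local norm suffices, but a local bound cannot be fed into Theorem~\ref{MAIN}. The paper resolves this with a cut-off argument (Corollary~\ref{Cut1}): it sets $v:=\tau u$ with $\tau$ a smooth compactly supported cut-off, so that $\|\partial_{x_n}v\|_{L^\infty(\R^n)}$ becomes a localized quantity; it checks that $Lv=g$ in a smaller ball with $\|g\|_{L^\infty}\le C\bigl(\|f\|_{L^\infty}+\|u\|_{L^\infty(\R^n)}\bigr)$, the extra terms coming from the tails of the nonlocal kernels against $(1-\tau)u$; it applies the preliminary Corollary~\ref{here2} to~$v$; and only then invokes formula~(8) of~\cite{FV}. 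This localization step is missing from your proposal and is the genuine content of the paper's Section~\ref{SEZ3}.

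Regarding the transverse-increment issue you raise (Theorem~\ref{MAIN} controls only symmetric increments in $x_n$): you are right that the passage to the full $C^\alpha(B_{1/2})$ norm is not immediate, and the paper itself asserts the intermediate Corollary~\ref{here2} ``follows directly from Theorem~\ref{MAIN}'' without spelling this out, so the concern is fair. But your proposed fixes do not work: interpolating between $\|u\|_{L^\infty}$ and a Lipschitz bound on~$u$ controls~$u$, not $\partial_{x_n}u$; and swapping the role of the last variable is not permitted when the other direction carries a fractional Laplacian, which is the generic case for~$L_\star$. Whatever the correct justification of the transverse modulus is, it is not the one you sketch, and in any case it is orthogonal to the cut-off/absorption argument that your proposal overlooks.
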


Using the results of this paper, it is also possible to deduce
regularity results in all the variables. As an example,
we consider the operator~$L$ in the case in which~$s_1=\dots=s_{m-1}=:s\in(0,1)$,
namely
$$ L_\star \,:=\,\sum_{i=1}^{m-1} a_i (-\Delta_{X_i})^{s} -\partial^2_{x_n},$$
and we give the following result:

\begin{corollary}\label{CC:x}
Let~$u:\R^n\to\R$ be a solution of~$L_\star u=f$ in~$B_1$ and~$\tilde u(x_1,\dots,x_{n-1}):=
u(x_1,\dots,x_{n-1},0)$. Let~$\alpha\in(0,1)$ with~$\alpha+2s\not\in\N$.
Then
$$
\|\tilde u\|_{C^{\alpha+2s}(B_{1/2}^{n-1})}\le
C\,\Big(\| f\|_{C^{\alpha}(B_1)}
+\|\partial_{x_n} f\|_{L^\infty(B_1)}+\|u\|_{L^\infty(\R^n)}
+\| u\|_{C^{\alpha}(B_1)}\Big),$$
for some $C>0$.
\end{corollary}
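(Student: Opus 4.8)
The plan is to restrict the equation to the hyperplane $\{x_n=0\}$, on which it collapses to a purely (anisotropic) fractional equation of order $2s$ for $\tilde u$, and then to apply the interior Schauder theory for such operators (developed in~\cite{SRO, ROV, FALL}), the new source term being handled by means of Corollary~\ref{here}. Concretely: for $i\in\{1,\dots,m-1\}$ the operator $(-\Delta_{X_i})^{s}$ involves only the variables $X_i$, all of which are components of $x'=(x_1,\dots,x_{n-1})$, so that $(-\Delta_{X_i})^{s}u(x',0)=(-\Delta_{X_i})^{s}\tilde u(x')$, where now $(-\Delta_{X_i})^{s}$ is understood as acting on functions of $x'\in\R^{n-1}$. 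Evaluating $L_\star u=f$ at $x_n=0$ therefore yields
$$ \mathcal L\,\tilde u=g \quad\text{in } B_1^{n-1},\qquad
\mathcal L:=\sum_{i=1}^{m-1}a_i(-\Delta_{X_i})^{s},\qquad
g(x'):=f(x',0)+\partial^2_{x_n}u(x',0). $$
The operator $\mathcal L$ is translation invariant on $\R^{n-1}$ and homogeneous of order $2s$, and, being a sum of fractional Laplacians of the \emph{same} order acting on complementary blocks of variables, it is precisely of the type for which interior estimates of the shape ``$\mathcal L w\in C^{\alpha}\Rightarrow w\in C^{\alpha+2s}$'' are available; the assumption $\alpha+2s\notin\N$ is the standard condition ensuring one is not at a resonant (integer) order.

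It remains to control the source $g$ in $C^{\alpha}(B_{1/2}^{n-1})$. The first summand is harmless, since $\|f(\cdot,0)\|_{C^{\alpha}(B_1^{n-1})}\le\|f\|_{C^{\alpha}(B_1)}$. For the second we differentiate the equation in the last variable: as $\partial_{x_n}$ commutes with each operator appearing in $L_\star$, the function $v:=\partial_{x_n}u$ solves an equation of the same form, $L_\star v=\partial_{x_n}f$ in $B_1$ (this is made rigorous via difference quotients, using that $u$ is already $C^{1,\alpha}$ in the interior thanks to Corollary~\ref{here}). Since $\partial_{x_n}f\in L^\infty(B_1)$, Corollary~\ref{here} applied to $v$ gives $\partial^2_{x_n}u=\partial_{x_n}v\in C^{\alpha}(B_{1/2})$ with a bound of the shape $C\big(\|\partial_{x_n}f\|_{L^\infty(B_1)}+\|\partial_{x_n}u\|_{L^\infty(\R^n)}\big)$, and the last quantity is in turn $\le C\big(\|f\|_{L^\infty(B_1)}+\|u\|_{L^\infty(\R^n)}\big)$ by the Lipschitz estimate recalled above (formula~(7) in~\cite{FV}). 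Restricting to $\{x_n=0\}$, we conclude that $\|g\|_{C^{\alpha}(B_{1/2}^{n-1})}$ is dominated by the right-hand side of the statement.

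Finally, one feeds this into the interior regularity estimate for $\mathcal L$: with $g\in C^{\alpha}(B_{1/2}^{n-1})$, with $\tilde u\in C^{\alpha}$ near $B_{1/2}^{n-1}$ so that $\|\tilde u\|_{C^{\alpha}(B_1^{n-1})}\le\|u\|_{C^{\alpha}(B_1)}$, and with $\tilde u\in L^\infty(\R^{n-1})$ so that $\|\tilde u\|_{L^\infty(\R^{n-1})}\le\|u\|_{L^\infty(\R^n)}$ (this last bound controlling the nonlocal tail of $\mathcal L$), one obtains $\tilde u\in C^{\alpha+2s}(B_{1/4}^{n-1})$ with norm dominated by the quantity in the statement, and a routine covering/rescaling argument upgrades $B_{1/4}^{n-1}$ to $B_{1/2}^{n-1}$. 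I expect the main obstacle to be the intermediate step, namely the $C^{\alpha}$ bound for the second normal derivative $\partial^2_{x_n}u$ on the slice — it is this that forces $\|\partial_{x_n}f\|_{L^\infty}$ to appear on the right-hand side — which is why we differentiate the equation and invoke Corollary~\ref{here} for $\partial_{x_n}u$; a secondary point is to verify that $\mathcal L$ is uniformly elliptic of order $2s$ (symbol comparable to $|\xi|^{2s}$), which holds precisely when all the $a_i$ are positive and is the reason the constant $C$ is allowed to depend on $a_1,\dots,a_{m-1}$.
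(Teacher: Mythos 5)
Your overall strategy coincides with the paper's: slice the equation to $\{x_n=0\}$, absorb $\partial_{x_n}^2 u$ into the right-hand side, control it by differentiating the equation via difference quotients and invoking Corollary~\ref{here}, and finish with the Schauder theory of~\cite{SRO}. However, there is a genuine gap at the localization step.

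Concretely, applying Corollary~\ref{here} to $v=\partial_{x_n}u$ (or to difference quotients $u^{(\tau)}$) produces a bound of the form
$C\big(\|\partial_{x_n}f\|_{L^\infty(B_1)}+\|\partial_{x_n}u\|_{L^\infty(\R^n)}\big)$, and you then assert that
$\|\partial_{x_n}u\|_{L^\infty(\R^n)}\le C\big(\|f\|_{L^\infty(B_1)}+\|u\|_{L^\infty(\R^n)}\big)$ via formula~(7)/(8) of~\cite{FV}. This is false: those estimates are \emph{interior} gradient bounds, controlling $\|\partial_{x_n}u\|_{L^\infty(B_{1/2})}$, not the gradient on all of $\R^n$. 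Since the equation is only posed in $B_1$, a bounded $u$ can oscillate arbitrarily fast in $x_n$ outside $B_1$, so $\|\partial_{x_n}u\|_{L^\infty(\R^n)}$ simply is not controlled by the data on the right-hand side of Corollary~\ref{CC:x}. A related (milder) issue is your final invocation of the Schauder estimate: Theorem 1.1(b) of~\cite{SRO} requires $\|\tilde u\|_{C^\alpha(\R^{n-1})}$ globally, not merely $\|\tilde u\|_{C^\alpha(B_1^{n-1})}$ plus $\|\tilde u\|_{L^\infty(\R^{n-1})}$.

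The paper resolves both of these at once by a cut-off argument: one replaces $u$ by $v:=\tau u$ for a suitable product cut-off $\tau$ supported in a compact box $Q$, verifies that $L_\star v=g$ in a smaller ball with a new right-hand side $g$ controlled by $\|f\|_{C^\alpha}$, $\|\partial_{x_n}f\|_{L^\infty}$ and $\|u\|_{L^\infty(\R^n)}$, and then applies the global lemma (the paper's Lemma~\ref{39ygccvghfi11vbdhewi}) to $v$. Because $v$ has compact support, every global norm of $v$ or its derivatives collapses to a norm on $Q$, and in particular $\|\partial_{x_n}v\|_{L^\infty(\R^n)}$ is bounded by $\|u\|_{L^\infty(\R^n)}$ plus $\|\partial_{x_n}u\|_{L^\infty(Q)}$; only at this point can the interior gradient estimate of~\cite{FV} legitimately be used. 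You should insert a cut-off of this type before concluding; as written, the passage from global to local norms of $\partial_{x_n}u$ does not hold.
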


We notice that when~$\alpha+2s>1$, the estimate in
Corollary~\ref{CC:x} provides continuity of the first derivative of the solution.
The organization of the rest of this paper is the following.
In Section~\ref{SEZ1}, we introduce an auxiliary function which serves
as barrier for our solution.
Interestingly, following\footnote{We remark
that Brandt's original barriers are modeled on second degree polynomials, while the ones exploited here are algebraically more complicated. Nevertheless, we believe that
there is a heuristic idea that can links the classical barriers to the new ones. Philosophically, the quadratic part of a second degree polynomial takes care of a ``constant'' right hand side of a second order equation, which is somehow ``the worst term'' in the class of bounded right hand sides. On the other hand, the linear part of a second order polynomial can be used to provide additional symmetries with respect to a section that divides the cube into two equal parts. The linear term is also responsible for the final estimate, since the quadratic part is negligible near the origin. The barriers constructed in this papers are based on these heuristic considerations: they recover the original barriers by
Brandt as~$s\nearrow1$ and they somehow preserve the geometric structures that we have discussed.}
an idea in~\cite{brandt69},
it is convenient to construct this 
barrier in an extended space. The additional variable plays the role of a translation
for the original solution and the first
barrier is constructed by superposing appropriate one-side
translations of the original solution, while the second barrier is a power-like function
that solves the equation with constant right-hand-side with a logarithmic modification
of a harmonic function in the translation coordinates.

The
proof of Theorem~\ref{MAIN} is completed in Section~\ref{SEZ2}, using the previous barrier
and the maximum principle.
Corollaries~\ref{here} and~\ref{CC:x} are proved in Sections~\ref{SEZ3}
and~\ref{SEZ4}, respectively.

\section{Building barriers}\label{SEZ1}

We use the notation~$X':=(X_1,\dots,X_{m-1})$
and define
$$ Q':= \left\{ (x',y,z)\in\R^{n+1} \;{\mbox{ s.t. }}\;
x'\in B^{N_1}_{d_1} \times\dots\times
B_{d_{m-1}}^{N_{m-1}},\quad
y\in\left(0,\frac{d_m}{4}\right)
\;{\mbox{ and }}\; z\in\left(0,\frac{d_m}{4}\right)
\right\}.$$
Then, we consider the extended operator defined as
$$ {\mathcal{L}}:=
\sum_{i=1}^{m-1} a_i (-\Delta_{X_i})^{s_i} 
-\frac1{2}\partial^2_{y}
-\frac1{2}\partial^2_{z}.$$
Also,
we use the standard notation~$r_+:=\max\{r,0\}$
for any~$r\in\R$ and,
for any~$(X',y,z)\in \R^{n+1}$, we define
$$ \phi(X',y,z):= \frac14\Big[ u(X',y_+ +z_+)
-u(X',y_+-z_+)-u(X',-y_++z_+)+u(X',-y_+-z_+)
\Big]. $$
The main properties of this barrier are listed below:

\begin{lemma}
For any~$(X',y,z)\in Q'$, we have that
\begin{equation}\label{p phi 1}
{\mathcal{L}}\phi(x',y,z)=
\frac14\left[ Lu(X',y+z)
-Lu(X',y-z)-Lu(X',-y+z)+Lu(X',-y-z)
\right]
\end{equation}
and
\begin{equation}\label{p phi 2}
\|{\mathcal{L}}\phi\|_{L^\infty(Q')}\le \|Lu\|_{L^\infty(Q_d)}
.\end{equation}
Also, 
\begin{equation}\label{p phi 3}
{\mbox{if either $y\le0$ or $z\le0$, then $\phi(X',y,z)=0$.}}
\end{equation}
Furthermore,
\begin{equation}\label{p phi 4}
\left| \phi\left( X',y,z\right)\right|\le 
\|\partial_{x_n} u\|_{L^\infty(\R^n)}\,\min\{y_+,z_+\}
.\end{equation}
\end{lemma}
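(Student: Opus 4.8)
The plan is to check the four assertions in order. The starting remark is that on $Q'$ one has $y>0$ and $z>0$, hence $y_+=y$ and $z_+=z$, so that there $\phi(X',y,z)=\frac14\big[u(X',y+z)-u(X',y-z)-u(X',-y+z)+u(X',-y-z)\big]$, a fixed linear combination of four translates of $u$ in the last coordinate only. Since $0<y,z<\frac{d_m}{4}$, each of the four last-coordinate arguments $\pm y\pm z$ lies in $\big(-\frac{d_m}{2},\frac{d_m}{2}\big)\subset(-d_m,d_m)$, so the four points $(X',\pm y\pm z)$ all belong to $Q_d$; this will be used repeatedly.

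To prove \eqref{p phi 1} I would apply $\mathcal L$ to this expression term by term. Each operator $(-\Delta_{X_i})^{s_i}$ acts only on the $X_i$-block sitting inside $X'$ and is linear, so it commutes with the superposition and with the $x_n$-translations, producing $\frac14$ times the same signed combination of $(-\Delta_{X_i})^{s_i}u$ evaluated at the four points $(X',\pm y\pm z)$. For the block $-\frac12\partial_y^2-\frac12\partial_z^2$ the key elementary observation is that, for any sign choice $\varepsilon_1,\varepsilon_2\in\{-1,1\}$, the function $(y,z)\mapsto u(X',\varepsilon_1 y+\varepsilon_2 z)$ has $\partial_y^2=\partial_z^2=\partial_{x_n}^2u(X',\varepsilon_1 y+\varepsilon_2 z)$, because $\varepsilon_j^2=1$; hence $-\frac12\partial_y^2-\frac12\partial_z^2$ applied to it equals $-\partial_{x_n}^2u$ at that same point. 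Summing the contributions over the four terms and recalling that $L=\sum_{i=1}^{m-1}a_i(-\Delta_{X_i})^{s_i}-\partial_{x_n}^2$ gives exactly the right-hand side of \eqref{p phi 1}.

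Then \eqref{p phi 2} follows at once: by \eqref{p phi 1} and the triangle inequality, and since the four points lie in $Q_d$, one has $|\mathcal L\phi(X',y,z)|\le\frac14\cdot 4\,\|Lu\|_{L^\infty(Q_d)}=\|Lu\|_{L^\infty(Q_d)}$. For \eqref{p phi 3}, if $y\le0$ then $y_+=0$ and the first and third terms of $\phi$ cancel against each other, as do the second and fourth; symmetrically, if $z\le0$ then $z_+=0$ and the terms cancel in the complementary pairing, so $\phi=0$ in either case.

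Finally, for \eqref{p phi 4} I would bound $\phi$ in two ways. Keeping the pairs $\pm z_+$ together gives $\phi=\frac14\big[\big(u(X',y_++z_+)-u(X',y_+-z_+)\big)-\big(u(X',-y_++z_+)-u(X',-y_+-z_+)\big)\big]$; since $u$ is Lipschitz in $x_n$ with constant $\|\partial_{x_n}u\|_{L^\infty(\R^n)}$, each bracketed difference has modulus at most $2z_+\,\|\partial_{x_n}u\|_{L^\infty(\R^n)}$, so $|\phi|\le z_+\,\|\partial_{x_n}u\|_{L^\infty(\R^n)}$. Regrouping to keep the pairs $\pm y_+$ together yields in the same fashion $|\phi|\le y_+\,\|\partial_{x_n}u\|_{L^\infty(\R^n)}$, and taking the smaller of the two bounds gives \eqref{p phi 4}. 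I expect the only point requiring care to be the sign bookkeeping in \eqref{p phi 1} — keeping track of the eight signs, checking that $(-\Delta_{X_i})^{s_i}$ passes through the superposition, and using $\varepsilon_j^2=1$ to collapse the second $y$- and $z$-derivatives onto $\partial_{x_n}^2 u$; the remaining items are then routine.
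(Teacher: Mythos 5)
Your proof is correct and follows essentially the same route as the paper, which simply calls \eqref{p phi 1}--\eqref{p phi 3} a ``direct calculation''/``by inspection'' and proves \eqref{p phi 4} by subtracting $\phi(X',0,z)$ and using the Lipschitz bound in $x_n$; your pairing argument for \eqref{p phi 4} is a minor cosmetic variant of the same estimate. You also helpfully make explicit the check (which the paper leaves implicit) that the four points $(X',\pm y\pm z)$ lie in $Q_d$.
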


\begin{proof} A direct calculation gives~\eqref{p phi 1}, which in turn implies~\eqref{p phi 2}.
Formula~\eqref{p phi 3} also follows by inspection.

As for~\eqref{p phi 4}, since the roles of~$y$ and~$z$ are the same,
we just prove the estimate when~$y_+\le z_+$. To this aim,
we use~\eqref{p phi 3} and we see that
\begin{eqnarray*}&&
\left| \phi\left( x',y,z\right)\right| =
\left| \phi\left( x',y,z\right)-\phi\left( x',0,z\right)\right|
\\
&&\qquad\le
\frac14\Big[ 
\left|u\left(X',y_++z_+\right)-
u\left(X',z_+\right)
\right|+
\left|u\left(X',y_+-z_+\right)-u\left(X',-z_+\right)
\right| \\
&&\qquad\qquad+
\left|u\left(X',-y_++z_+\right)-u\left(X',z_+\right)
\right|+
\left|u\left(X',-y_+-z_+\right)
-u\left(X',-z_+\right)\right|
\Big]\\
&&\qquad\le \|\partial_{x_n} u\|_{L^\infty(\R^n)}\,y_+,
\end{eqnarray*}
which implies~\eqref{p phi 4}.\end{proof}

Now, for any~$(X',y,z)\in \R^{n+1}$, we define
\begin{eqnarray*}
\psi(X',y,z)&:= &\sum_{i=1}^{m-1}
\frac{\|u\|_{L^\infty(\R^n)}}{1-\left( \frac34\right)^s}\,\left(1
-
\left(1-\frac{|X'_i|^2}{d^2_i}
\right)_+^s\right)
\\&&\qquad+\frac{4\,\|\partial_{x_n} u\|_{L^\infty(\R^n)}\;\,y_+ \,z_+}{d_m} +
{\kappa \,y_+\,z_+}\,\left|
\log\frac{2d_m}{y_+ + z_+}\right|,\end{eqnarray*}
where $\kappa$ and $\tilde{c}_{N,s}$ are as in \eqref{CH}.

We notice that the choice of~$\tilde{c}_{N_i,s_i}$ is made in such a way that
\begin{equation}\label{choice}
(-\Delta_{X_i})^{s_i}\left(1-\frac{|X'_i|^2}{d^2}\right)^{s_i}_+
=\frac{\tilde{c}_{N_i,s_i}}{d^{2s_i}},
\end{equation}
see e.g. Table~3 in~\cite{dyda}.
On the other hand, the definition of~$\kappa$ makes it sufficiently large
to let~$\psi$ dominate~$\phi$, as stated in the following
result:

\begin{lemma}\label{PCO}
We have that
\begin{eqnarray}
\label{L120-1}
&& \psi\pm\phi\ge0 \;{\mbox{ in }}\;\R^{n+1}\setminus Q'\\
\label{L120-2}
{\mbox{and }}\;&&
{\mathcal{L}}(\psi\pm\phi)\le0 \;{\mbox{ in }}\; Q'.
\end{eqnarray}
\end{lemma}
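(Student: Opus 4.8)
The plan is to verify the two assertions of Lemma~\ref{PCO} separately, exploiting the product structure of the last two terms of $\psi$ together with the sign and support properties of $\phi$ already recorded in the previous lemma.

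For~\eqref{L120-1}, I would argue that on $\R^{n+1}\setminus Q'$ either one of the spatial constraints $X'_i\in B^{N_i}_{d_i}$ fails, or $y\notin(0,d_m/4)$, or $z\notin(0,d_m/4)$. In the first case the corresponding summand $\frac{\|u\|_{L^\infty(\R^n)}}{1-(3/4)^s}\bigl(1-(1-|X'_i|^2/d_i^2)_+^s\bigr)$ equals exactly $\frac{\|u\|_{L^\infty(\R^n)}}{1-(3/4)^s}$, which already dominates $\|\partial_{x_n}u\|_{L^\infty}\min\{y_+,z_+\}\ge|\phi|$ once one checks that the relevant length scales make the constant large enough — this is where the bound $\|\partial_{x_n}u\|_{L^\infty}\le$ (something controlled by $\|u\|_{L^\infty}/d$) or a direct comparison on the admissible range of $y_+,z_+\le d_m/4$ enters. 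In the remaining cases $y\le 0$ or $z\le 0$ forces $\phi=0$ by~\eqref{p phi 3} while $\psi\ge0$ trivially, so $\psi\pm\phi\ge0$; if instead $y\ge d_m/4$ or $z\ge d_m/4$ one falls back on the product term $\frac{4\|\partial_{x_n}u\|_{L^\infty}y_+z_+}{d_m}$, which by~\eqref{p phi 4} dominates $|\phi|\le\|\partial_{x_n}u\|_{L^\infty}\min\{y_+,z_+\}$ precisely because the other factor then exceeds $d_m/4$, so $\frac{4y_+z_+}{d_m}\ge\min\{y_+,z_+\}$. The non-negativity of the logarithmic term on the admissible range (where $\frac{2d_m}{y_++z_+}\ge 1$, making the absolute value redundant and the term nonnegative) should be noted as well.

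For~\eqref{L120-2}, I would compute $\mathcal{L}\psi$ term by term. The fractional Laplacian $(-\Delta_{X_i})^{s_i}$ applied to the $i$th spatial summand gives, by~\eqref{choice}, the negative quantity $-\frac{\|u\|_{L^\infty(\R^n)}}{1-(3/4)^s}\cdot\frac{\tilde c_{N_i,s_i}}{d_i^{2s_i}}$ (the constant $1$ is killed by the operator, and the truncated power contributes $+\tilde c_{N_i,s_i}/d_i^{2s_i}$ inside $B^{N_i}_{d_i}$, entering $\mathcal{L}\psi$ with the minus sign carried by $(-\Delta)^{s}$ acting on $1-(\cdot)_+^s$). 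Meanwhile $-\frac12\partial_y^2-\frac12\partial_z^2$ annihilates the pure product $y_+z_+$ on $Q'$ (where $y_+=y$, $z_+=z$) and produces a nonpositive contribution $-\kappa\cdot\frac{y z}{\text{(something)}}$ from the logarithmic term, since $y z\log\frac{2d_m}{y+z}$ solves the equation $(-\frac12\partial_y^2-\frac12\partial_z^2)\bigl(yz\log\frac{2d_m}{y+z}\bigr)=\frac{yz}{y+z}\cdot(\text{positive})$ — this is exactly the ``logarithmic modification of a harmonic function'' advertised in the introduction. Summing, $\mathcal{L}\psi\le-\sum_i\frac{\|u\|_{L^\infty}}{1-(3/4)^s}\frac{\tilde c_{N_i,s_i}}{d_i^{2s_i}}-\kappa\cdot(\text{positive})$. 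On the other side, $\mathcal{L}(\pm\phi)=\pm\mathcal{L}\phi$ is bounded in absolute value by $\|Lu\|_{L^\infty(Q_d)}$ by~\eqref{p phi 2}. The choice of $\kappa$ in~\eqref{CH}, namely $\kappa\ge\frac43(\|Lu\|_{L^\infty(Q_d)}+\sum_i\cdots)$, is then seen to be exactly what is needed to absorb both $\|Lu\|_{L^\infty}$ and the spatial defect, giving $\mathcal{L}(\psi\pm\phi)\le0$.

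The main obstacle I anticipate is the careful bookkeeping in the computation of $(-\frac12\partial_y^2-\frac12\partial_z^2)\bigl(\kappa\,y_+z_+|\log\frac{2d_m}{y_++z_+}|\bigr)$ on $Q'$: one must differentiate $yz\log\frac{2d_m}{y+z}$ twice in each variable, check that the mixed second derivatives cancel appropriately, and verify that the surviving term has the correct (nonpositive) sign and a size comparable to $\kappa$ times a bounded positive factor, so that it genuinely contributes in the right direction and with enough strength. A secondary subtlety is confirming that on the relevant domain $y+z\le d_m/2<2d_m$, so that $\log\frac{2d_m}{y+z}>0$ and the absolute value in the definition of $\psi$ can be dropped, which is needed both for the sign of the term and for the differentiation to be valid. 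Everything else is a matter of assembling the inequalities~\eqref{p phi 2}, \eqref{p phi 4}, \eqref{choice} with the explicit value of $\kappa$.
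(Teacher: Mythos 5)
Your overall strategy matches the paper's: split $\R^{n+1}\setminus Q'$ into the pieces where a spatial constraint fails, where $y\le0$ or $z\le0$ (use \eqref{p phi 3}), and where $y\ge d_m/4$ or $z\ge d_m/4$ (use \eqref{p phi 4} against the product term $4\|\partial_{x_n}u\|_{L^\infty}y_+z_+/d_m$); then compute ${\mathcal{L}}\psi$ termwise via \eqref{choice} and compare with \eqref{p phi 2}. However, two genuine gaps remain. The central one is the computation you yourself flag as ``the main obstacle'' and leave undone: the paper shows
$$\left(\partial_y^2+\partial_z^2\right)\left(yz\,\log\tfrac{2d_m}{y+z}\right)=-2+\frac{2yz}{(y+z)^2},$$
and then uses the elementary bound $\frac{yz}{(y+z)^2}\le\frac14$ to get the \emph{uniform} estimate $\le-\frac32$ on all of $Q'$. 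That uniform, non-degenerate constant is exactly what produces the factor $\frac34\kappa$ and hence justifies the $\frac43$ in \eqref{CH}. Your guessed form $\frac{yz}{y+z}\cdot(\text{positive})$ is qualitatively wrong: it degenerates as $y\to0$ or $z\to0$, precisely where the barrier must still beat $\|Lu\|_{L^\infty(Q_d)}$ (and where the limit $z\searrow0$ is taken in Section 3), so a contribution of that shape could not close the argument. You also describe this contribution as simultaneously ``nonpositive'' and equal to a positive quantity; only the explicit computation settles the bookkeeping.

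The second gap is in the spatial case of \eqref{L120-1}. Your route via $|\phi|\le\|\partial_{x_n}u\|_{L^\infty}\min\{y_+,z_+\}$ does not close there: on that part of the complement $y$ and $z$ are unrestricted (so $\min\{y_+,z_+\}$ need not be $\le d_m/4$), and no bound of $\|\partial_{x_n}u\|_{L^\infty}$ in terms of $\|u\|_{L^\infty}/d_m$ is available or needed. The correct observation is trivial: $\phi$ is one quarter of a signed sum of four values of $u$, hence $|\phi|\le\|u\|_{L^\infty(\R^n)}$ everywhere, while the $i$th spatial summand of $\psi$ already equals (at least) $\|u\|_{L^\infty(\R^n)}$ as soon as $|X_i'|\ge d_i/2$, which is how the paper handles this piece.
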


\begin{proof} We remark that
the complement of~$Q'$
can be written as~$ P_1\cup P_2\cup P_3\cup P_4\cup P_5$,
where
\begin{eqnarray*}
&& P_1:= \left\{ |X_i'|\geq\frac{d_i}2
{\mbox{ for some }} i\in\{1,\dots,m-1\}\right\},\\
&& P_2:=\{y\le0\},\\
&& P_3:=\{z\le0\},\\
&& P_4:= \left\{y\ge\frac{d_m}{4}\right\},\\
{\mbox{and }}&& P_5:= \left\{z\ge\frac{d_m}4\right\}
.
\end{eqnarray*}
Now, on~$P_1$,
\begin{eqnarray*}
\psi\ge
\frac{\|u\|_{L^\infty(\R^n)}}{1-\left( \frac34\right)^s}\,
\left(1-
\left(1-\frac{1}{4}\right)^s\right) 
= \|u\|_{L^\infty(\R^n)} \ge |\phi|.\end{eqnarray*}
Also, on~$P_2\cup P_3$, using~\eqref{p phi 3} we see that~$\psi\ge0=|\phi|$.
In addition, recalling~\eqref{p phi 4}, in~$P_4$ we have that
$$ |\phi|
\le \|\partial_{x_n} u\|_{L^\infty(\R^n)}\,z_+\le
\frac{4\|\partial_{x_n} u\|_{L^\infty(\R^n)}}{d_m}\,y_+\,z_+\le \psi,$$
and a similar computation holds in~$P_5$.
By collecting these estimates, the claim in~\eqref{L120-1} plainly follows.

Now we observe that in $Q'$ we have that
$$ \frac{2d_m}{y_+ + z_+} = \frac{2d_m}{y+z} \ge 1$$
and consequently
$$ \left|
\log\frac{2d_m}{y_+ + z_+}\right| =
\log\frac{2d_m}{y+ z}.$$
Also,
$$ \frac{\partial^2}{\partial y^2}
\left( yz\,\log\frac{2d_m}{y+z} \right)=
-\frac{2z}{y+z}+\frac{yz}{(y+z)^2}$$
and therefore
\begin{equation}\label{q-1}
\left(\frac{\partial^2}{\partial y^2}
+\frac{\partial^2}{\partial z^2}
\right) \left( yz\,\log\frac{2d_m}{y+z} \right)
=-2+\frac{2yz}{(y+z)^2}.\end{equation}
Now we point out that, for any~$y$, $z>0$,
\begin{equation}\label{q-2}
\frac{yz}{(y+z)^2} \le\frac14.
\end{equation}
For this, for any~$t\geq0$, we set
$$ h(t):=
\frac{t}{(1+t)^2}$$
and we have that
$$ \sup_{t\ge0} h(t)=h(1)=\frac14.$$
Then, we find that
$$ \frac{yz}{(y+z)^2}
=h\left(\frac{z}{y}\right)\le\frac14,$$
which is~\eqref{q-2}.

{F}rom~\eqref{q-1} and~\eqref{q-2} we obtain that
$$ \left(\frac{\partial^2}{\partial y^2}
+\frac{\partial^2}{\partial z^2}
\right) \left( yz\,\log\frac{2d_m}{y+z} \right)\le -2
+\frac12=-\frac{3}{2}.$$
Therefore, in view of~\eqref{choice}, in~$Q'$ it holds that
$$ {\mathcal{L}}\psi \le
\sum_{i=1}^m
\frac{\|u\|_{L^\infty(\R^n)}}{1-\left( \frac34\right)^s}\,
\frac{\tilde c_{N_i,s_i} }{d_i^{2s_i}}
-\frac{3}{4}\,\kappa.$$
Hence, our choice of~$\kappa$ in \eqref{CH} implies that
$$ {\mathcal{L}}\psi \le\|Lu\|_{L^\infty(Q_d)}.$$
This, together with~\eqref{p phi 2}, proves~\eqref{L120-2}.\end{proof}

\section{Completion of the proof of Theorem~\ref{MAIN}}\label{SEZ2}

By Lemma~\ref{PCO} and the maximum principle
(see e.g. formula~(22) in~\cite{FV}), we have that~$\psi\pm\phi\ge0$
in~$Q'$, that is~$|\phi|\le\psi$ in~$Q'$.
We write this inequality at~$X'=0$, $y>0$,
divide by~$z>0$ and pass to the limit: we find that
\begin{eqnarray*}
&&\frac1{2}\left| \partial_{x_n} u(0,y)
-\partial_{x_n} u(0,-y)
\right| \\
&=&\lim_{z\searrow0}\frac1{4z}\left| u(0,y+z)
-u(0,y-z)-u(0,-y+z)+u(0,-y-z)
\right| \\&\le&
\frac{4\,\|\partial_{x_n} u\|_{L^\infty(\R^n)}}{d_m}\,y+
\kappa \,y\,\log\frac{2d_m}{y}
.\end{eqnarray*}
This establishes Theorem \ref{MAIN}.

\section{Proof of Corollary \ref{here}}\label{SEZ3}

To prove Corollary \ref{here}, we first give a preliminary
result that follows directly from Theorem~\ref{MAIN}:

\begin{corollary}\label{here2}
Let $u:\R^n\to\R$ be a solution of~$ Lu=f$ in~$ B_1$. Then,
for any $\alpha\in(0,1)$,
\begin{equation} \label{quasi11}
\| \partial_{x_n} u\|_{C^\alpha(B_{1/2})} \le C\,\Big(
\|f\|_{L^\infty(B_1)}+\|u\|_{L^\infty(\R^n)}+
\|\partial_{x_n} u\|_{L^\infty(\R^n)}\Big),\end{equation}
for some~$ C > 0$, that depends on~$\alpha$,
$ a_1,\dots,a_{m-1}$
$ s_1,\dots,s_{m-1}$ and~$N_1,\dots,N_{m-1}$.
\end{corollary}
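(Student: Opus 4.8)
The plan is to derive Corollary~\ref{here2} directly from Theorem~\ref{MAIN} by a covering and rescaling argument. First, I would fix a point~$x^0=(x^0_1,\dots,x^0_n)$ in a slightly larger ball, say~$B_{3/4}$, and apply Theorem~\ref{MAIN} to the translated function~$v(x):=u(x+x^0)$, which solves~$Lv=f(\cdot+x^0)=:g$ in a cube~$Q_d$ centered at the origin with all~$d_i$ comparable to a fixed dimensional constant~$\rho>0$ (chosen small enough that~$Q_d\subseteq B_1$ after translation). The theorem then controls, for every~$y\in(-d_m/4,d_m/4)$, the symmetric increment~$|\partial_{x_n} v(0,y)-\partial_{x_n} v(0,-y)|$ — that is, the increment of~$\partial_{x_n}u$ between~$x^0+y e_n$ and~$x^0-y e_n$ — by a quantity of the form~$C(\|g\|_{L^\infty(Q_d)}+\|u\|_{L^\infty(\R^n)}+\|\partial_{x_n}u\|_{L^\infty(\R^n)})\,|y|\,\log\frac{C}{|y|}$, where the constant~$C$ absorbs~$\rho$, the~$a_i$, the~$s_i$, the~$N_i$, and the constants~$\tilde c_{N_i,s_i}$ and the factors~$1-(3/4)^{s_i}$ appearing in~\eqref{CH}.

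The next step is to promote this to a full H\"older modulus of continuity of~$\partial_{x_n}u$ in \emph{every} direction on~$B_{1/2}$. Since the symmetric increment around~$x^0$ in the direction~$e_n$ is bounded for every admissible~$x^0$, and since any two nearby points~$p,q\in B_{1/2}$ with~$p-q$ parallel to~$e_n$ can be written as~$x^0\pm y e_n$ with~$x^0=(p+q)/2\in B_{3/4}$ and~$2|y|=|p-q|$ small, we immediately get~$|\partial_{x_n}u(p)-\partial_{x_n}u(q)|\le C\,|p-q|\,\log\frac{C}{|p-q|}\le C_\alpha\,|p-q|^\alpha$ for any~$\alpha\in(0,1)$, using~$\lim_{t\to0}t^{1-\alpha}\log(1/t)=0$. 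For increments in the transversal directions~$X'$, I would instead bound~$\|\nabla_{X'}\partial_{x_n}u\|_{L^\infty(B_{5/8})}$: here one uses that~$\partial_{x_n}u$ is itself bounded (it appears on the right side of~\eqref{quasi11}) and that, heuristically, classical interior estimates for the local part~$-\partial^2_{x_n}$ combined with the structure of~$L$ give control of tangential derivatives; more robustly, one can differentiate the equation in the~$X'$ variables (the nonlocal operators commute with such translations) and apply known interior regularity for~$L$ in those variables, or simply reduce again to Theorem~\ref{MAIN}-type symmetric-increment bounds after an auxiliary argument. Combining the~$e_n$-direction H\"older bound with an~$L^\infty$ bound on tangential derivatives yields the full~$C^\alpha(B_{1/2})$ estimate for~$\partial_{x_n}u$, which together with the triangle inequality and interpolation gives~\eqref{quasi11}.

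The main obstacle I anticipate is the passage from the one-dimensional symmetric increment estimate of Theorem~\ref{MAIN} to control in the transversal directions~$X'$: Theorem~\ref{MAIN} only speaks about increments of~$\partial_{x_n}u$ along the~$x_n$-axis through a fixed center, so obtaining regularity of~$\partial_{x_n}u$ with respect to the~$X'$-variables requires an additional input. The cleanest route is to observe that for~$h\in\R^{N_i}$ the difference quotient~$w(x):=\frac{u(x+h^{(i)})-u(x)}{|h|}$ again solves~$Lw=\frac{f(\cdot+h^{(i)})-f}{|h|}$ in a slightly smaller cube — but this only yields a uniform bound if~$f$ is itself Lipschitz in~$X_i$, which is \emph{not} assumed in Corollary~\ref{here2} (only~$f\in L^\infty$). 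Hence one must instead exploit that~$\partial_{x_n}^2 u = \sum_i a_i(-\Delta_{X_i})^{s_i}u - f$ is bounded once the nonlocal terms are controlled by~$\|u\|_{L^\infty(\R^n)}$ via the tail estimates already implicit in the definition of~$\kappa$, and then use that a function whose second derivative in one fixed direction is bounded, and which has bounded gradient, is automatically Lipschitz — a one-variable calculus fact — combined with interior elliptic estimates applied slice-by-slice. Managing the interplay between the global~$L^\infty$ hypotheses, the nonlocal tails, and the purely local transversal regularity is the delicate bookkeeping step; everything else is a routine rescaling and covering argument.
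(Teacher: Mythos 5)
The paper offers no written proof of Corollary~\ref{here2}: it simply asserts that the result ``follows directly from Theorem~\ref{MAIN},'' i.e.\ by translating the center of the box~$Q_d$ and converting the almost--Lipschitz increment bound into a $C^\alpha$ bound via~$|y|\log(1/|y|)\lesssim_\alpha |y|^\alpha$. Your first paragraph reproduces this routine part faithfully, and your second paragraph correctly identifies the real sticking point: Theorem~\ref{MAIN} only controls symmetric increments of~$\partial_{x_n}u$ along the~$x_n$-axis through each center, so the full norm~$\|\partial_{x_n}u\|_{C^\alpha(B_{1/2})}$ (increments in the~$X'$-directions as well) is not an immediate consequence. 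You deserve credit for spotting this; the paper itself supplies no argument for the transversal directions.

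However, the fixes you sketch do not close the gap, and two of them rest on incorrect claims. First, it is not true that~$\sum_i a_i(-\Delta_{X_i})^{s_i}u$ is bounded ``once the nonlocal terms are controlled by~$\|u\|_{L^\infty(\R^n)}$ via the tail estimates already implicit in the definition of~$\kappa$'': the constant~$\tilde c_{N_i,s_i}$ in~\eqref{CH} enters through the explicit barrier identity~\eqref{choice} applied to~$\psi$, not through any pointwise bound on~$(-\Delta_{X_i})^{s_i}u$; the far-field tail of the singular integral is indeed controlled by~$\|u\|_{L^\infty}$, but the near-field part requires $C^{2s_i+\varepsilon}$-type regularity of~$u$ in~$X_i$, which the hypotheses do not provide. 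Consequently one cannot conclude that~$\partial_{x_n}^2u$ is bounded, and the ``one-variable calculus fact'' you invoke (bounded gradient plus bounded one-directional second derivative implies Lipschitz) is both vacuous (bounded gradient is already Lipschitz) and inapplicable, since neither hypothesis for~$\partial_{x_n}u$ is available. Second, the proposal to bound~$\nabla_{X'}\partial_{x_n}u$ in~$L^\infty$ by differentiating the equation in the~$X'$-variables runs exactly into the obstacle you yourself point out in your last paragraph: the transported right-hand side~$\frac{f(\cdot+h^{(i)})-f}{|h|}$ is not uniformly bounded when~$f$ is only~$L^\infty$. In short, your diagnosis of the weak point is correct, but the proposed repairs are either circular or based on false estimates, and they do not yield a proof of the transversal H\"older bound that Corollary~\ref{here2} requires.
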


With this result, 
an elementary, but useful, cut-off\footnote{The main
difference between~\eqref{quasi11} and~\eqref{GGFF}
is that the norm of~$\partial_{x_n} u$ gets localized
in the second formula.}
argument, gives that:

\begin{corollary}\label{Cut1}
Let $u:\R^n\to\R$ be a solution of~$ Lu=f$ in~$ B_1$. Then,
for any $\alpha\in(0,1)$,
\begin{equation}\label{GGFF}
\| \partial_{x_n} u\|_{C^\alpha(B_{1/4})} \le C\,\Big(
\|f\|_{L^\infty(B_{1/2})}+\|u\|_{L^\infty(\R^n)}+
\|\partial_{x_n} u\|_{L^\infty(B_{1/2})}\Big),\end{equation}
for some~$ C > 0$, that depends on~$\alpha$,
$ a_1,\dots,a_{m-1}$
$ s$ and~$N_1,\dots,N_{m-1}$.
\end{corollary}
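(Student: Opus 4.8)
The plan is to deduce~\eqref{GGFF} from~\eqref{quasi11} by a standard rescaling-and-cutoff argument, whose only purpose is to replace the global norm~$\|\partial_{x_n}u\|_{L^\infty(\R^n)}$ on the right-hand side of~\eqref{quasi11} by the local norm~$\|\partial_{x_n}u\|_{L^\infty(B_{1/2})}$. First I would fix a smooth cutoff function~$\eta:\R^n\to[0,1]$ with~$\eta\equiv1$ on~$B_{1/4}$ and~$\mathrm{supp}\,\eta\subset B_{1/2}$, and set~$v:=\eta u$. The function~$v$ agrees with~$u$ on~$B_{1/4}$, is supported in~$B_{1/2}$, and satisfies~$\|v\|_{L^\infty(\R^n)}\le\|u\|_{L^\infty(\R^n)}$ and~$\|\partial_{x_n}v\|_{L^\infty(\R^n)}\le C\big(\|u\|_{L^\infty(\R^n)}+\|\partial_{x_n}u\|_{L^\infty(B_{1/2})}\big)$, with~$C$ depending only on~$\eta$.

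The key step is to compute~$Lv$ on~$B_{1/4}$ and check it is controlled by the quantities appearing on the right of~\eqref{GGFF}. On~$B_{1/4}$ we have~$\eta\equiv1$, so the local term contributes~$-\partial^2_{x_n}v=-\partial^2_{x_n}u$ there. For each fractional piece~$a_i(-\Delta_{X_i})^{s_i}$ with~$s_i\in(0,1)$, I would split the defining integral in~\eqref{FL} for~$v=\eta u$ into the region where~$y^{(i)}$ is small (where~$\eta$ is still~$1$, giving exactly~$a_i(-\Delta_{X_i})^{s_i}u$ up to a tail) and the region where~$y^{(i)}$ is large; the difference between~$(-\Delta_{X_i})^{s_i}v$ and~$(-\Delta_{X_i})^{s_i}u$ at a point of~$B_{1/4}$ is then an integral of~$(1-\eta)u$ against the kernel, which is bounded by~$C\|u\|_{L^\infty(\R^n)}$ since the kernel is integrable away from the diagonal and~$1-\eta$ vanishes near the base point. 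The piece~$a_i(-\Delta_{X_i})^{s_i}u$ itself equals~$Lu+\partial^2_{x_n}u-\sum_{j\ne i}a_j(-\Delta_{X_j})^{s_j}u$, but it is cleaner to observe directly that~$Lv=Lu+L(\eta u-u)$ on~$B_{1/4}$, and that~$L((\eta-1)u)$ at a point of~$B_{1/4}$ has no local contribution (since~$\eta-1\equiv0$ near that point) and only the nonlocal tail contributions just estimated. Hence on~$B_{1/4}$,
$$ Lv = f + g, \qquad \|g\|_{L^\infty(B_{1/4})}\le C\,\|u\|_{L^\infty(\R^n)}. $$
(Here~$f$ is the given right-hand side, and we only need it on~$B_{1/2}$, which is where we know~$Lu=f$.)

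With this in hand I would apply Corollary~\ref{here2} to~$v$, after a fixed rescaling~$x\mapsto x/2$ so that the relevant ball becomes~$B_1$ and~$B_{1/4}$ becomes~$B_{1/2}$ in the rescaled picture; rescaling changes the constants~$a_i$ and introduces only harmless factors of~$2^{\cdot}$ depending on the~$s_i$ and the dimension. This yields
$$ \|\partial_{x_n}v\|_{C^\alpha(B_{1/4})}\le C\Big(\|f\|_{L^\infty(B_{1/2})}+\|u\|_{L^\infty(\R^n)}+\|\partial_{x_n}v\|_{L^\infty(\R^n)}\Big), $$
and since~$v=u$ on~$B_{1/4}$ the left side is~$\|\partial_{x_n}u\|_{C^\alpha(B_{1/4})}$, while the last term on the right is absorbed into~$C(\|u\|_{L^\infty(\R^n)}+\|\partial_{x_n}u\|_{L^\infty(B_{1/2})})$ by the bound on~$v$ above. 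This is exactly~\eqref{GGFF}. The main obstacle, and the only point requiring genuine care, is the nonlocal bookkeeping in the second paragraph: one must verify that cutting off outside~$B_{1/2}$ changes the fractional operators, evaluated inside~$B_{1/4}$, only by a term bounded by~$\|u\|_{L^\infty(\R^n)}$ and not by any derivative norm — this uses crucially that the distance from~$B_{1/4}$ to~$\{\eta\ne1\}$ is bounded below, so the singular part of the kernel never sees the cutoff.
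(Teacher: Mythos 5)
Your proposal is essentially the paper's own argument: cut off $u$ to get $v=\eta u$, observe that on the inner ball the local term is unchanged while each fractional piece picks up only an integrable-tail commutator of size $O(\|u\|_{L^\infty(\R^n)})$, then apply Corollary~\ref{here2} to $v$ and resize. The one wrinkle is a self-inconsistency in your cutoff: you take $\eta\equiv1$ exactly on $B_{1/4}$ yet later invoke a positive lower bound on $\operatorname{dist}(B_{1/4},\{\eta\ne1\})$, which is then zero; you need $\eta\equiv1$ on a strictly larger ball (say $B_{3/10}$, as the paper implicitly does by taking the cutoff $\equiv1$ out to radius $3/4$ and establishing $Lv=g$ on all of $B_{1/2}$), and correspondingly the rescaling factor $x\mapsto x/2$ should be adjusted so that the ball where the modified equation actually holds maps onto $B_1$. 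These are routine bookkeeping fixes covered by the ``up to resizing balls'' clause, and the key insight — that the nonlocal error is controlled by $\|u\|_{L^\infty(\R^n)}$ alone because the kernel is integrable away from the diagonal — is exactly right.
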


\begin{proof}
Let $\tau_o\in C^\infty(\R)$
be such that $\tau_o(r)=1$ if $|r|\le 3/4$ and $\tau_o(r)=0$
if $|r|\ge 4/5$. For any $i\in\{1,\dots,m\}$ and any $X_i\in\R^{N_i}$,
we set $\tau_i(X_i):=\tau_o(|X_i|)$. Let also
\begin{eqnarray*}
&& \tau(x)=\tau(X_1,\dots,X_m):=\tau_1(X_1)\dots\tau_m(X_m)\\ {\mbox{and }}&&
v(x):=\tau(x)\,u(x).
\end{eqnarray*}
Notice that $v=u$ in $B_{3/4}$.
Also, if $x=(X_1,\dots,X_m)\in B_{1/2}$ and $|y^{(i)}|\le\frac1{10}$,
we have that $|X^{(i)}+y^{(i)}|\le \frac34$ and $|X^{(j)}|\le 3/4$,
that gives $\tau(x)=1$.
Thus, for any $x\in B_{1/2}$ and for any $i\in\{1,\dots,m-1\}$,
if $s_i\in(0,1)$,
\begin{equation}\label{5wertfyguhjsgcxfscds}
\begin{split}
&\int_{\R^{N_i}} \frac{v(x)-v(x+y^{(i)})}{|y^{(i)}|^{N_i+2s_i}}\,dy^{(i)}
= \int_{\R^{N_i}} \frac{u(x)-v(x+y^{(i)})}{|y^{(i)}|^{N_i+2s_i}}\,dy^{(i)}\\
&\qquad
=\int_{\R^{N_i}} \frac{u(x)-u(x+y^{(i)})}{|y^{(i)}|^{N_i+2s_i}}\,dy^{(i)}+g_i(x)\\
&\qquad{\mbox{ where }} g_i(x):=
\int_{|y^{(i)}|\ge \frac1{10} } 
\frac{(1-\tau)u(x+y^{(i)})}{|y^{(i)}|^{N_i+2s_i}}\,dy^{(i)}.\end{split}\end{equation}
Notice that, for any $x\in B_{1/2}$ and for any $i\in\{1,\dots,m-1\}$,
\begin{equation} \label{gmei}
|g_i(x)|\le
\int_{|y^{(i)}|\ge \frac1{10}}
\frac{\|u\|_{L^\infty(\R^n)}}{|y^{(i)}|^{N_i+2s_i}}\,dy^{(i)}
\le C\,\|u\|_{L^\infty(\R^n)},\end{equation}
for some $C>0$.
In addition, in $B_{1/2}$,
$$ \partial^2_{x_n} v = 
\partial^2_{x_n}u.$$
As a consequence of this and \eqref{5wertfyguhjsgcxfscds}, in $B_{1/2}$ we have that
\begin{equation}\label{4567ihgzf} L v = g,\end{equation}
with
\begin{equation} \label{gme}
g(x):= f(x)+
\sum_{{1\le i\le m-1}\atop{s_i\in(0,1)}}b_i g_i(x),\end{equation}
for suitable $b_1,\dots,b_{m-1}$.
We remark that
$$ \|g\|_{L^\infty(B_{1/2})} \le C\,\Big(\|f\|_{L^\infty(B_{1/2})} 
+\|u\|_{L^\infty(\R^n)}\Big),$$
up to renaming $C>0$, thanks to \eqref{gmei}.
{F}rom this, \eqref{gme} and Corollary \ref{here2} we obtain that
\begin{eqnarray*}
\| \partial_{x_n} u\|_{C^\alpha(B_{1/4})} &=&
\| \partial_{x_n} v\|_{C^\alpha(B_{1/4})}
\\ &\le& C\,\Big(
\|g\|_{L^\infty(B_{1/2})}+\|v\|_{L^\infty(\R^n)}+
\|\partial_{x_n} v\|_{L^\infty(\R^n)}\Big)\\ &\le& C\,\Big(
\|f\|_{L^\infty(B_{1/2})}+\|u\|_{L^\infty(\R^n)}+
\|\partial_{x_n} u\|_{L^\infty([-1,1]^n)}\Big),\end{eqnarray*}
up to renaming $C>0$. This is the desired result, up to resizing balls.
\end{proof}

Now we recall
that an estimate for~$\| \partial_{x_n}u\|_{L^\infty(B_{1/2})}$
has been given in formula~(8) of~\cite{FV}. {F}rom this and \eqref{GGFF},
the claim in Corollary \ref{here} plainly follows, up to resizing balls.

\section{Proof of Corollary~\ref{CC:x}}\label{SEZ4}

The argument combines some techniques from Corollary 1.3 in~\cite{FV}
and Theorem~1.1(b) in~\cite{SRO}, together\footnote{We take this
opportunity to correct a flaw in the statement of Corollary 1.3 in~\cite{FV}.
Namely, the condition 
$$ \gamma:=\left\{
\begin{matrix}
2s & {\mbox{ if }} s\ne 1/2,\\
1-\epsilon & {\mbox{ if }} s=1/2
\end{matrix}
\right.$$
has to be replaced by
$$ \gamma:=\left\{
\begin{matrix}
2s & {\mbox{ if }} s< 1/2,\\
1-\epsilon & {\mbox{ if }} s\ge1/2
\end{matrix}
\right.$$
and the two lines after the statement can be deleted. The correct
statement of Corollary 1.3 in~\cite{FV}
is the one in the arxiv version \cite{FVGIU}}
with Corollary~\ref{here}
here.

To prove Corollary \ref{CC:x} we start with a preliminary and global version
of it:

\begin{lemma}\label{39ygccvghfi11vbdhewi}
Let~$\alpha\in(0,1)$ with~$\alpha+2s\not\in\N$.
Let~$u:\R^n\to\R$ be a solution of~$L_\star u=f$ in~$B_1$.

Let $\tilde u(x_1,\dots,x_{n-1}):=
u(x_1,\dots,x_{n-1},0)$.

Then
\begin{eqnarray*}
\|\tilde u\|_{C^{\alpha+2s}(B_{3/4}^{n-1})}&\le& C\,\Big(
\| f\|_{C^{\alpha}(B_{4/5})}
+\|\partial_{x_n} f\|_{L^\infty(B_1)}
\\ &&\quad+
\| u\|_{C^{\alpha}(\R^{n-1}\times [-{1}/{100},{1}/{100}])}
+\|\partial_{x_n} u\|_{L^\infty(\R^n)}
\Big)
,\end{eqnarray*}
for some $C>0$.
\end{lemma}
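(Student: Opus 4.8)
\textbf{Proof proposal for Lemma~\ref{39ygccvghfi11vbdhewi}.}

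The plan is to view $\tilde u$ as the solution of an elliptic-type fractional equation on $\R^{n-1}$ in the variables $X' = (X_1,\dots,X_{m-1})$, obtained by restricting $L_\star u = f$ to the slice $\{x_n = 0\}$. Write $L' := \sum_{i=1}^{m-1} a_i(-\Delta_{X_i})^{s}$, so that on $B_1$ we have $L' u = f + \partial^2_{x_n} u$. Setting $x_n = 0$ and using that the operators $(-\Delta_{X_i})^{s}$ act only on the first $n-1$ variables, we get $L' \tilde u(X') = f(X',0) + \partial^2_{x_n} u(X',0) =: \tilde f(X')$ on $B_1^{n-1}$. The point is that $L'$ is a (possibly degenerate) sum of fractional Laplacians of the same order $2s$ in orthogonal blocks of variables, i.e.\ exactly the anisotropic operator for which Schauder-type interior estimates of the form $\|\tilde u\|_{C^{\alpha+2s}} \le C(\|\tilde f\|_{C^\alpha} + \|\tilde u\|_{L^\infty} + \text{tail})$ are available — this is Theorem~1.1(b) in~\cite{SRO} (or the anisotropic Schauder estimate of~\cite{ROV}); the condition $\alpha + 2s \notin \N$ is precisely what is needed there to avoid the borderline integer case.

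The key steps, in order, are: (i) derive the slice equation $L'\tilde u = \tilde f$ rigorously on $B_{4/5}^{n-1}$, being careful that $u$ is defined on all of $\R^n$ so the nonlocal integrals make sense and produce the right ``tail'' contributions; (ii) control $\|\tilde f\|_{C^\alpha(B_{4/5}^{n-1})}$ by $\|f\|_{C^\alpha(B_{4/5})} + \|\partial^2_{x_n} u(\cdot,0)\|_{C^\alpha(B_{4/5}^{n-1})}$, and bound the latter: here is where Corollary~\ref{here} enters — it gives $\|\partial_{x_n} u\|_{C^\alpha(B_{1/2})} \le C(\|f\|_{L^\infty} + \|u\|_{L^\infty(\R^n)})$, but we need a derivative of $\partial^2_{x_n} u$. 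The trick, exactly as in Corollary~1.3 of~\cite{FV}, is to differentiate the equation $L_\star u = f$ in $x_n$: since $L_\star$ has constant coefficients, $w := \partial_{x_n} u$ solves $L_\star w = \partial_{x_n} f$ on $B_1$, so applying Corollary~\ref{here} to $w$ yields $\|\partial_{x_n} w\|_{C^\alpha(B_{1/2})} = \|\partial^2_{x_n} u\|_{C^\alpha(B_{1/2})} \le C(\|\partial_{x_n} f\|_{L^\infty(B_1)} + \|\partial_{x_n} u\|_{L^\infty(\R^n)})$, which accounts for the $\|\partial_{x_n} f\|_{L^\infty(B_1)}$ and $\|\partial_{x_n} u\|_{L^\infty(\R^n)}$ terms on the right-hand side; (iii) feed $\tilde f$ into the anisotropic Schauder estimate for $L'$ on the slice, tracking that the $L^\infty$/tail term for $\tilde u$ is bounded by $\|u\|_{C^\alpha(\R^{n-1}\times[-1/100,1/100])}$ — a neighborhood of the slice is needed rather than the slice itself because the fractional operators $(-\Delta_{X_i})^s$ see the global behavior of $\tilde u$, and one estimates the nonlocal tail by the stated norm; (iv) collect constants and resize balls from $B_{1/2}$/$B_1$ down to $B_{3/4}^{n-1}$ as in the statement.

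The main obstacle I anticipate is step (i)–(iii) bookkeeping around the nonlocal tails: one must check that restricting $u$ to the slice $\{x_n=0\}$ and then applying a nonlocal operator in the remaining variables really does commute to give $L'\tilde u$ with an honest error controlled by $\|u\|_{C^\alpha}$ on a slab, and that the Schauder estimate invoked from~\cite{SRO, ROV} is stated (or can be localized) in a form that tolerates the degeneracy allowed by $a_i \ge 0$ (some $a_i$ may vanish). A secondary subtlety is the exact form of the interior regularity statement one cites — whether it is phrased with a $C^\alpha$ right-hand side and an $L^\infty$ plus tail term for the solution, matching what we can supply — but since all operators here have constant coefficients and the same homogeneity $2s$, this is exactly the setting those references cover, so no genuinely new estimate is required; the work is in assembling the pieces and in the cut-off/localization needed to pass from global to interior norms.
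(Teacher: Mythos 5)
Your proposal follows essentially the same route as the paper: derive the slice equation $L'\tilde u = f(\cdot,0)+\partial_{x_n}^2 u(\cdot,0)=:\tilde f$, control $\|\partial_{x_n}^2 u\|_{C^\alpha}$ by applying Corollary~\ref{here} to the $x_n$-differentiated equation, and then conclude with the anisotropic Schauder estimate of Theorem~1.1(b) in~\cite{SRO}. The only cosmetic difference is that the paper implements the differentiation of the equation rigorously via incremental quotients $u^{(\tau)}=(u(\cdot+\tau e_n)-u)/\tau$ and a passage to the limit $\tau\to0$, rather than applying Corollary~\ref{here} directly to $w=\partial_{x_n}u$.
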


\begin{proof}
Given~$\tau\in\R$, with~$|\tau|$ sufficiently small,
we set
$$ u^{(\tau)}(x):= \frac{u(x+\tau e_n)-u(x)}{\tau}
\quad{\mbox{ and }}\quad f^{(\tau)}(x):= \frac{f(x+\tau e_n)-f(x)}{\tau}.$$
We remark that one can bound~$\|u^{(\tau)}\|_{L^\infty(\R^n)}$
with~$\|\partial_{x_n} u\|_{L^\infty(\R^n)}$.
Similarly, one can bound~$
\|\partial_{x_n} u^{(\tau)}\|_{L^\infty(\R^n)}$
with~$\|\partial_{x_n}^2 u\|_{L^\infty(\R^n)}$.

Notice also that~$L_\star u^{(\tau)}=f^{(\tau)}$ in $B_1$,
and thus Corollary~\ref{here} implies that, for any $\alpha\in(0,1)$,
\begin{equation*}
\begin{split}
&\| \partial_{x_n} u^{(\tau)}\|_{C^\alpha(B_{4/5}^{n-1}\times[-1/100,1/100])}\,\\ \le\,& C\,\Big(
\|f^{(\tau)}\|_{L^\infty(B_{9/10}^{n-1}/\times[-1/10,1/10])}
+\|u^{(\tau)}\|_{L^\infty(\R^n)}
\Big)
\\ \le\,& 
C\,\Big(
\|\partial_{x_n} f\|_{L^\infty(B_1)}+\|\partial_{x_n} u\|_{L^\infty(\R^n)}\Big)
,\end{split}\end{equation*}
for some~$C>0$.

Accordingly, sending~$\tau\to0$,
\begin{equation} \label{quasi11BIS}
\| \partial_{x_n}^2 u\|_{C^\alpha(B_{4/5}^{n-1}\times[-1/100,1/100])} \le
C\,\Big(
\|\partial_{x_n} f\|_{L^\infty(B_1)}+\|\partial_{x_n} u\|_{L^\infty(\R^n)}
\Big)
.\end{equation}
In addition, given~$x_n\in\R$, with~$|x_n|\le\frac{1}{100}$,
we define
\begin{eqnarray*}
\tilde f(x_1,\dots,x_{n-1}):=
f(x_1,\dots,x_{n-1},0)+\partial^2_{x_n} u(x_1,\dots,x_{n-1},0).\end{eqnarray*}
Notice that
\begin{equation}\label{parti1}
\begin{split}
& \|\tilde u\|_{C^{\alpha}(\R^{n-1})} \le
\| u\|_{C^{\alpha}(\R^{n-1}\times [-{1}/{100},{1}/{100}])}
\\ {\mbox{and }}\quad&
\|\tilde f\|_{C^{\alpha}(B_{4/5}^{n-1})}
\le \| f\|_{C^{\alpha}(B_{4/5}^{n-1}\times[-{1/100},{1}/{100}])}+
 \| \partial^2_{x_n} u\|_{C^{\alpha}(B_{4/5}^{n-1}\times [-{1/100},{1}/{100}])}
.\end{split}
\end{equation}
It holds that~$\tilde L\tilde u=\tilde f$ in~$B^{n-1}_{99/100}$,
for a suitable linear operator~$\tilde L$ which
satisfies formulas (1.1) and~(1.2) in~\cite{SRO}: for this fact,
see the proof of Corollary 1.3 in~\cite{FV} (and in particular
formula~(28) there). 
Consequently, we are in the position of using Theorem 1.1(b) in~\cite{SRO}.
In this way, we have that
$$ \|\tilde u\|_{C^{\alpha+2s}(B_{3/4}^{n-1})}\le C\,\Big(
\|\tilde u\|_{C^{\alpha}(\R^{n-1})}+
\|\tilde f\|_{C^{\alpha}(B_{4/5}^{n-1})}\Big),$$
for some~$C>0$, provided that~$\alpha+2s\not\in \N$.

{F}rom this and~\eqref{parti1} we obtain that
\begin{eqnarray*}
\|\tilde u\|_{C^{\alpha+2s}(B_{3/4}^{n-1})}&\le& C\,\Big(
\| f\|_{C^{\alpha}(B_{4/5}^{n-1}\times[-{1/100},{1}/{100}])}\\ &&\quad+
\| u\|_{C^{\alpha}(\R^{n-1}\times [-{1}/{100},{1}/{100}])}
+\| \partial^2_{x_n} u\|_{C^{\alpha}(B_{4/5}^{n-1}\times [-{1/100},{1}/{100}])}
\Big),\end{eqnarray*}
up to renaming~$C>0$. 

This and~\eqref{quasi11BIS} imply the desired result,
again, up to renaming~$C>0$.\end{proof}

Now we complete the proof of Corollary \ref{CC:x} by using a cut-off argument
as in the proof of Corollary \ref{Cut1}. Indeed, in that notation,
from \eqref{5wertfyguhjsgcxfscds} and
\eqref{4567ihgzf}, we can write \begin{equation}\label{765h5346fdsa57sdf}
L v = g\qquad{\mbox{in }}B_{1/2},\end{equation} 
with
$g$ as in \eqref{gme} and
$$ g_i(x):=
\int_{|x-z^{(i)}|\ge \frac1{10} }
\frac{(1-\tau)u(z^{(i)})}{|x-z^{(i)}|^{N_i+2s_i}}\,dz^{(i)}.$$
Notice that we can take derivatives in $x$ inside the integral,
hence, for any $j\in \N$ and $x\in B_{1/2}$,
$$ |D^j g_i(x)|\le C_j\,
\int_{|x-z^{(i)}|\ge \frac1{10} }
\frac{\|u\|_{L^\infty(\R^n)}}{|x-z^{(i)}|^{N_i+2s_i+j}}\,dz^{(i)}
\le C_j\,\|u\|_{L^\infty(\R^n)},$$
for suitable $C_j>0$.
In particular, we have that
$$ \|g\|_{C^\alpha(B_{4/5})}+
\|\partial_{x_n} g\|_{L^\infty(B_1)}
\le C\,\Big( \|f\|_{C^\alpha(B_{4/5})}+\|\partial_{x_n} f\|_{L^\infty(B_1)}
+ \|u\|_{L^\infty(\R^n)}\Big).$$
Hence, writing $\tilde v(x_1,\dots,x_{n-1}):=
v(x_1,\dots,x_{n-1},0)$,
from \eqref{765h5346fdsa57sdf} and Lemma \ref{39ygccvghfi11vbdhewi},
we obtain that
\begin{eqnarray*}
&& \|\tilde u\|_{C^{\alpha+2s}(B_{3/4}^{n-1})}=
\|\tilde u\|_{C^{\alpha+2s}(B_{3/4}^{n-1})}\\ &\le&
C\,\Big(
\| g\|_{C^{\alpha}(B_{4/5})}
+\|\partial_{x_n} g\|_{L^\infty(B_1)}
+\| v\|_{C^{\alpha}(\R^{n-1}\times [-{1}/{100},{1}/{100}])}
+\|\partial_{x_n} v\|_{L^\infty(\R^n)}
\Big)\\&\le&
C\,\Big(\| f\|_{C^{\alpha}(B_{4/5})}
+\|\partial_{x_n} f\|_{L^\infty(B_1)}+\|u\|_{L^\infty(\R^n)}
+\| u\|_{C^{\alpha}(Q)}
+\|\partial_{x_n} u\|_{L^\infty(Q)}
\Big),\end{eqnarray*}
where $Q$ is the support of the cut-off $\tau$.

Now, up to resizing balls, we may suppose that the original
equation was satisfied in some domain $Q'$, with $Q'\Supset Q$.
Hence, from formula~(8) of~\cite{FV}, we know that 
$$ \|\partial_{x_n} u\|_{L^\infty(Q)}\le C\,\Big(
\|f\|_{L^\infty(Q')}+\|u\|_{L^\infty(\R^n)}\Big),$$
and hence the result in Corollary \ref{CC:x} follows.

\vfill \end{document}